\tikzset{arw/.style={->,>=stealth'}}
\newtheorem{theorem}{Theorem}[section]
\newtheorem{lemma}[theorem]{Lemma}
\newtheorem{example}[theorem]{Example}
\begin{document}

\title{A Lattice of Gambles}

\author{
\authorblockN{Paul Cuff}
\authorblockA{Princeton University}
\and
\authorblockN{Thomas Cover}
\authorblockA{Stanford University}
\and
\authorblockN{Gowtham Kumar}
\authorblockA{Stanford University}
\and
\authorblockN{Lei Zhao}
\authorblockA{Stanford University}
}

\maketitle

\begin{abstract}
A gambler walks into a hypothetical fair casino with a very real dollar bill, but by the time he leaves he's exchanged the dollar for a random amount of money.  What is lost in the process?  It may be that the gambler walks out at the end of the day, after a roller-coaster ride of winning and losing, with his dollar still intact, or maybe even with two dollars.  But what the gambler loses the moment he places his first bet is position.  He exchanges one distribution of money for a distribution of lesser quality, from which he cannot return.  Our first discussion in this work connects known results of economic inequality and majorization to the probability theory of gambling and Martingales.  We provide a simple proof that fair gambles cannot increase the Lorenz curve, and we also constructively demonstrate that any sequence of non-increasing Lorenz curves corresponds to at least one Martingale.

We next consider the efficiency of gambles.  If all fair gambles are available then one can move down the lattice of distributions defined by the Lorenz ordering.  However, the step from one distribution to the next is not unique.  Is there a sense of efficiency with which one can move down the Lorenz stream?  One approach would be to minimize the average total volume of money placed on the table.  In this case, it turns out that implementing part of the strategy using private randomness can help reduce the need for the casino's randomness, resulting in less money on the table that the casino cannot get its hands on.
\end{abstract}

\begin{keywords}
fair gamble, Gini index, lattice, Lorenz curve, Martingale convergence theorem.
\end{keywords}

\section{Introduction}
What is lost when one gambles?  Even in a hypothetical fair casino with fair bets offered, a gambler trades one random variable of wealth for another---an exchange that cannot be reversed through subsequent fair gambles.  A gambler who walks into a fair casino with \$1 walks out with a random amount of wealth with mean still \$1.  The gambler cannot devise a strategy that allows him to recover his \$1 with probability one.  He can only use fair gambles to exchange his random variable for one that is further degraded.

In this paper we identify the Lorenz curve \cite{lorenz05} as a simple way to characterize which probability distributions can be transformed through fair gambles to which others---an idea that can be distilled from the literature through an investigation of topics such as majorization, Schur convexity, second-order stochastic dominance, and related ideas in \cite{strassen65}-\cite{dubbins-savage76}.  The partial ordering of non-negative, mean-one distributions, which places $p_1 \succeq p_2$ if $p_1$ can be the starting point of a fair gambling system with $p_2$ the ending point, coincides with the partial ordering induced by the associated Lorenz curves.  Thus, given two distributions $p_a$ and $p_b$, we can use the Lorenz curve to identify the most degraded distribution from which we can gamble to either $p_a$ or $p_b$.  Similarly, we can find the least degraded distribution that can be produced from either starting point $p_a$ or $p_b$.

\begin{example}
\label{example lattice}

Consider two different mean-one distributions $p_a$ and $p_b$ given by

\begin{eqnarray*}
p_a(x) & = & \left\{
\begin{array}{ll}
1/3, & x = 0, \\
2/3, & x = 3/2,
\end{array}
\right. \\
p_b(x) & = & \left\{
\begin{array}{ll}
2/3 , & x = 1/2, \\
1/3, & x = 2.
\end{array}
\right.
\end{eqnarray*}
What is the first distribution that can be arrived at by starting from either $p_a$ or $p_b$ using fair gambles?  It's not the distribution that places all of the mass on 0 and 2.  The least degraded distribution that they can both arrive at is
\begin{eqnarray*}
p_{a \cap b}(x) & = & \left\{
\begin{array}{ll}
1/3, & x = 0, \\
1/3, & x = 1, \\
1/3, & x = 2.
\end{array}
\right. \\
\end{eqnarray*}

This is identified with ease from the Lorenz curves, illustrated in Fig. \ref{figure example lorenz}.

\begin{figure}[h]
\centering
\begin{tikzpicture}[scale=2.5]
 \draw[<->,>=stealth'] (-.1,0) node[above =5mm, left]{$p_a$} to (2.1,0);
 \draw[arw,very thick] (0,0) node[below] {$0$} -- (0,1/6);
 \draw[arw,very thick] (3/2,0) node[below] {$1.5$} -- (3/2,2/6);

 \draw[<->,>=stealth'] (-.1,-2/3) node[above =5mm, left]{$p_b$} to (2.1,-2/3);
 \draw[arw,very thick] (1/2,-2/3) node[below] {$0.5$} -- (1/2,2/6-2/3);
 \draw[arw,very thick] (2,-2/3) node[below] {$2$} -- (2,1/6-2/3);

 \draw[dotted] (-.5,-1) -- (2.4,-1);

 \draw[<->,>=stealth'] (-.1,-1.5) node[above =5mm, left]{$p_{a \cap b}$} to (2.1,-1.5);
 \draw[arw,very thick] (0,-1.5) node[below] {$0$} -- (0,1/6-1.5);
 \draw[arw,very thick] (1,-1.5) node[below] {$1$} -- (1,1/6-1.5);
 \draw[arw,very thick] (2,-1.5) node[below] {$2$} -- (2,1/6-1.5);
\end{tikzpicture}
\caption{Distributions $p_a$ and $p_b$, represented in the top two graphs, each have a mean of one, however no system of fair gambles can begin at one of these distributions and arrive at the other.  The least degraded distribution that can be reached from either $p_a$ or $p_b$ is represented by $p_{a \cap b}$ in the bottom graph.}
\label{figure example pmf}
\end{figure}
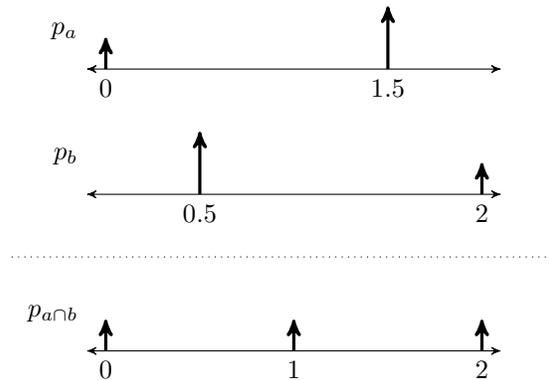

\end{example}

In this paper we show the relationship between Lorenz curves and fair gambling and discuss the lattice formed by this relationship.  This implies a ``second law'' of Martingales, characterized by the Lorenz curve or any Schur convex function, similar to entropy increase in Markov chains.  These relationships also provide a simple proof of weak convergence of Martingales.

We also consider the efficiency of going from one distribution to another.  For any system of gambles, there is an amount of money that may be lost and therefore must be placed on the table at a casino.  To model the fact that casinos actually take a profit by offering games with a house edge, we can imagine that any money placed on the table is taxed. There are in general many ways of gambling from $p_1$ to $p_2$, assuming $p_1$ is less degraded than $p_2$.  We identify a general method for constructing the most efficient gambling system.

\section{Lorenz Curve}

\subsection{Definition}

The Lorenz curve is a metric developed by Lorenz in 1905 for the purpose of measuring economic inequality in a population. If the population is ordered according to increasing wealth, forming an increasing sequence of individual wealths $w_1,w_2,...,w_N$, then the Lorenz curve is obtained from the sequence of partial sums of this sequence, $l_i = \sum_{k=1}^i w_k$. Normally, the Lorenz curve $L(u)$ is scaled so that the domain and range are $[0,1]$.  That is, $L(u)$ is the linear interpolation of the points $\{(i/N, \frac{l_i}{l_N})\}_{i=0}^N$.

In Figure \ref{figure us lorenz} we see the Lorenz curve for the United States of America (based on income rather than wealth).  We can interpret the curve in the following way.  Consider the point $A = (.6,.27)$.  This says that the poorest 60\% of the population account for 27\% of the total income.

\begin{figure}
\centering
\begin{tikzpicture}[scale=2,
 dot/.style={draw=blue,fill=blue,circle,minimum size=1mm,inner sep=0pt}]
 \draw (0,0) to node[above=10mm,midway,sloped] {Percent of Income} (0,2);
 \draw (-.02,2*.2) node[left]{20\%} -- (.02,2*.2);
 \draw (-.02,2*.4) node[left]{40\%} -- (.02,2*.4);
 \draw (-.02,2*.6) node[left]{60\%} -- (.02,2*.6);
 \draw (-.02,2*.8) node[left]{80\%} -- (.02,2*.8);
 \draw (-.02,2) node[left]{100\%} -- (3,2);
 \draw (0,0) to node[below=6mm,midway,sloped] {Percent of Households} (3,0);
 \draw (3*.2,-.02) node[below]{20\%} -- (3*.2,.02);
 \draw (3*.4,-.02) node[below]{40\%} -- (3*.4,.02);
 \draw (3*.6,-.02) node[below]{60\%} -- (3*.6,.02);
 \draw (3*.8,-.02) node[below]{80\%} -- (3*.8,.02);
 \draw (3,-.02) node[below]{100\%} -- (3,2);
 \draw[thick] (0,0) -- (3*.2,2*.04) -- (3*.4,2*.14) -- (3*.6,2*.27) node[dot,black,very thick] {} node[below right] {A} -- (3*.8,2*.53) -- (3*.95,2*.8) -- (3,2);
\end{tikzpicture}
\caption{Lorenz curve for 2003 U.S. household income distribution.}
\label{figure us lorenz}
\end{figure}
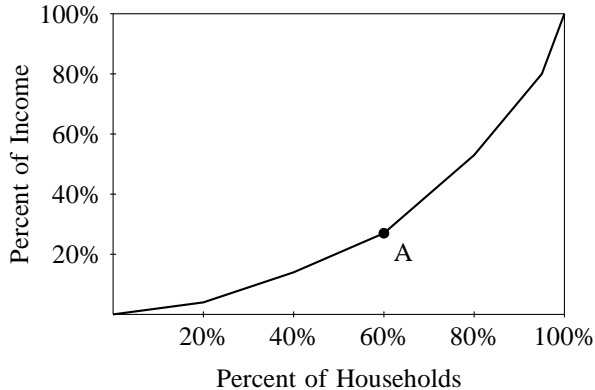

The Lorenz curve can also be defined for a mean-one probability distribution.  The interpretation is that each quantum of probability corresponds to an individual in the economic wealth distribution setting.

For a continuous random variable $X \sim p_X(x)$ we define
\begin{eqnarray*}
L(u) & \triangleq & \min_{{\cal A} \subset \Re \; : \; \mathbf{P}({\cal A}) = u} \mathbf{E} \; X \mathbf{1}_{X \in {\cal A}}.
\end{eqnarray*}

We can define the Lorenz curve in a way that accommodates all random variables (continuous, discrete, and singular) by expressing it as an optimization over all random variables $U$ that are correlated with $X$.  The general expression for the Lorenz curve is given by
\begin{eqnarray}
\label{equation lorenz general def}
L(u) & \triangleq & \min_{p_{U|X}, {\cal A} \; : \; \mathbf{P}(U \in {\cal A}) = u} \mathbf{E} \; X \mathbf{1}_{U \in {\cal A}}.
\end{eqnarray}

For example, we might let $U$ be a random variable with a uniform distribution on the interval $[0,1]$, and let $g(u)$ be a function such that $X = g(U) \sim p_X$.  This is in fact an optimal choice of $U$.  An example of a function that would work would be the inverse of the cumulative distribution function which we define by
\begin{eqnarray*}
F^{-1} (u) & \triangleq & \inf \{ x \; : \; F(x) \geq u \}.
\end{eqnarray*}
A straightforward way of calculating the Lorenz curve is
\begin{eqnarray*}
L(u) & = & \int_0^u F^{-1}(\tau) d \tau.
\end{eqnarray*}

\subsection{Properties}

\begin{enumerate}
\item $L(0) = 0$.
\item $L(1) = 1$.
\item $L(u)$ is a continuous, convex function.
\item $0 \leq L(u) \leq u, \quad \forall u \in [0,1]$.
\end{enumerate}

Any function with these properties is a Lorenz curve and uniquely specifies a mean-one distribution.

It is possible to talk about Lorenz curves for random variables that take negative values, in which case Property 4 would not necessarily hold.  We only discuss non-negative random variables in this work.

\section{Martingale Lattice}

\subsection{Feasibility}

Suppose one wishes to exchange a random variable $X_1 \sim p_{X_1}$ for another random variable $X_2 \sim p_{X_2}$ using a system of fair gambles.  The system may specify a different fair gamble, or sequence of gambles, for each value of $X_1$.  The following theorem states when such a system exists.

\begin{theorem}
\label{theorem Lorenz upstream}
Given marginal distributions $p_{X_1}$ and $p_{X_2}$ with mean one, there exists a joint distribution $p_{X_1,X_2}$ with the property that $\mathbf{E} \; (X_2|X_1) = X_1$ if and only if $L_1(u) \geq L_2(u) \; \forall u \in [0,1]$, where $L_1$ is the Lorenz curve for $p_{X_1}$ and $L_2$ is the Lorenz curve for $p_{X_2}$.
\end{theorem}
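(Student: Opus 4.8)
The plan is to prove the two implications separately. The forward direction (necessity of Lorenz dominance) is the ``fair gambles cannot raise the Lorenz curve'' statement and admits a short variational argument built directly on the definition (\ref{equation lorenz general def}). The reverse direction (sufficiency) requires \emph{constructing} an explicit martingale coupling from the curves, and this is where the real work lies; it is the measure-theoretic incarnation of the classical equivalence among the existence of a martingale coupling, the convex (second-order stochastic dominance) ordering, and Lorenz dominance.

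For necessity, suppose $\mathbf{E}(X_2 \mid X_1) = X_1$. I would fix $u \in [0,1]$ and let $(U,\mathcal{A})$ be an optimizer in (\ref{equation lorenz general def}) for $L_1(u)$, so that $\mathbf{P}(U \in \mathcal{A}) = u$ and $\mathbf{E}[X_1 \mathbf{1}_{U \in \mathcal{A}}] = L_1(u)$. Using the martingale kernel $p_{X_2 \mid X_1}$, I extend this to a joint law of $(U, X_1, X_2)$ in which $X_2$ is conditionally independent of $U$ given $X_1$, i.e. $U - X_1 - X_2$ is a Markov chain. Then the pair $(U, \mathcal{A})$, viewed through the induced law of $(U, X_2)$, is feasible in the minimization defining $L_2(u)$, and by the tower property
\[ \mathbf{E}[X_2 \mathbf{1}_{U \in \mathcal{A}}] = \mathbf{E}\big[\mathbf{E}(X_2 \mid X_1, U)\,\mathbf{1}_{U \in \mathcal{A}}\big] = \mathbf{E}[X_1 \mathbf{1}_{U \in \mathcal{A}}] = L_1(u). \]
Since $L_2(u)$ is a minimum over all feasible pairs, $L_2(u) \le L_1(u)$, and as $u$ was arbitrary we get $L_1 \ge L_2$ pointwise. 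The only subtlety is admissibility of the $U - X_1 - X_2$ form for the $X_2$ problem, which holds because the definition ranges over all $p_{U \mid X_2}$ and we merely marginalize the constructed joint law.

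For sufficiency I would argue constructively. Assume $L_1 \ge L_2$ with $L_1(1) = L_2(1) = 1$. The cleanest route is to treat finitely-supported laws first: there Lorenz dominance is exactly majorization of the mean-normalized mass vectors, so by Hardy--Littlewood--P\'olya / Birkhoff theory $p_{X_2}$ is reachable from $p_{X_1}$ through a finite sequence of elementary mean-preserving transfers (Robin Hood / $T$-transforms), each of which is manifestly a single fair gamble; composing the associated conditional laws yields a martingale kernel with $\mathbf{E}(X_2 \mid X_1) = X_1$. For the general case I would use the quantile representation $L_i(u) = \int_0^u F_i^{-1}(\tau)\,d\tau$, approximate $p_{X_1}$ and $p_{X_2}$ by finitely-supported laws whose Lorenz curves still satisfy the dominance, build the finite couplings, and pass to the limit using tightness. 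The main obstacle is precisely this limiting step: one must choose the approximating couplings so that the conditional-mean constraint survives the weak limit and the marginals converge to the correct $p_{X_1}, p_{X_2}$, rather than merely landing on some convex-order-consistent pair. An alternative is to invoke Strassen's theorem directly for the ``if'' direction, but since the paper's stated aim is an explicit construction, I would carry the transfer-based coupling through by hand and control the limit carefully.
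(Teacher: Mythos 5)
Your necessity argument is the same as the paper's converse: construct $U$ uniform on $[0,1]$ with $X_1=g(U)$, impose the Markov chain $U - X_1 - X_2$, and use the tower property to show the optimal set for $L_1(u)$ is feasible for the minimization defining $L_2(u)$. No difference there. Your sufficiency argument, however, takes a genuinely different route. The paper proves the finite-alphabet case by an explicit double induction: a lemma showing a point mass at $c$ can be gambled into any mean-$c$ finite distribution by peeling off one atom at a time, and then an induction on $|\mathcal{X}|$ driven by a geometric construction on the Lorenz curves (extend the second segment of $L_1$ leftward until it meets $L_2$ at $(u^*,l^*)$, gamble the mass at $x_1$ accordingly, and recurse on a support of size one less). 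This yields the coupling constructively and gamble-by-gamble, which matters for the rest of the paper. Your route goes through majorization and Birkhoff--von Neumann, which is shorter if one is willing to cite that machinery, and like the paper you only sketch the passage to general (non-finite) distributions, which the paper also defers.

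There are two concrete problems in your sufficiency half that need repair. First, the direction of the elementary operations is backwards: a Robin Hood transfer / $T$-transform is an \emph{averaging} and produces a vector majorized by its input, i.e.\ it moves the Lorenz curve \emph{up}; read as a coupling it gives $\mathbf{E}(X_1\mid X_2)=X_2$, not $\mathbf{E}(X_2\mid X_1)=X_1$. The elementary step that is ``manifestly a single fair gamble'' is the reverse operation --- a mean-preserving spread splitting one atom into two values with the correct conditional mean --- and the decomposition you need is the Rothschild--Stiglitz-type statement that convex-order dominance of finitely supported laws factors into finitely many such spreads, not literally the Hardy--Littlewood--P\'olya $T$-transform theorem. (Alternatively, skip transfers entirely: with $x^{(1)} \prec x^{(2)}$ Birkhoff gives a doubly stochastic $D$ with $x^{(1)} = D x^{(2)}$, and setting $\mathbf{P}(X_2 = x^{(2)}_j \mid X_1 = x^{(1)}_i) = D_{ij}$ yields the martingale coupling in one step.) Second, the identification ``Lorenz dominance $=$ majorization of mass vectors'' only holds after you represent both laws as uniform distributions over vectors of a common length, which requires replicating atoms to a common denominator (fine for rational probabilities, and needing one more approximation otherwise); as written your finite case silently assumes this reduction.
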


\begin{proof}
This theorem can be distilled from the literature.  The concept of majorization, which applies to vectors, is related to the Lorenz curve.  If a random variable $X$ is formed from taking the uniform distribution over the elements of a vector $x$, and a random variable $Y$ and vector $y$ share the same relationship, then $x$ majorizes $y$ if and only if the Lorenz curve for $Y$ is above the Lorenz curve for $X$.  Based on this connection, the theorem can be established.  However, we present a simple proof of the theorem here.

For the converse part of the proof, assume that a joint distribution $p_{X_1,X_2}$ has the property that $\mathbf{E} \; (X_2|X_1) = X_1$.  Construct $X_1 = g(U) \sim p_{X_1}$ where $U$ is uniformly distributed on $[0,1]$, as is used in the paragraph following \eqref{equation lorenz general def}.  Also let $U - X_1 - X_2$ form a Markov chain.

For all $u \in [0,1]$,
\begin{eqnarray*}
L_1(u) & = & \min_{{\cal A} \; : \; \mathbf{P}(U \in {\cal A}) = u} \mathbf{E} \; X_1 \mathbf{1}_{U \in {\cal A}} \\
& = & \mathbf{E} \; X_1 \mathbf{1}_{U \in {\cal A}^*} \\
& = & \mathbf{E} \; \mathbf{E} \left( X_2 | X_1 \right) \mathbf{1}_{U \in {\cal A}^*} \\
& = & \mathbf{E} \; \mathbf{E} \left( X_2 | X_1,U \right) \mathbf{1}_{U \in {\cal A}^*} \\
& = & \mathbf{E} \; \mathbf{E} \left( X_2  \mathbf{1}_{U \in {\cal A}^*} | X_1,U \right) \\
& = & \mathbf{E} \; X_2  \mathbf{1}_{U \in {\cal A}^*} \\
& \geq & \min_{p_{U|X_2}, {\cal A} \; : \; \mathbf{P}(U \in {\cal A}) = u} \mathbf{E} \; X_2 \mathbf{1}_{U \in {\cal A}} \\
& = & L_2(u).
\end{eqnarray*}
The symbol ${\cal A}^*$ refers to the $argmin$ of the right-hand side of the first equality.  The fourth equality is due to the construction of $U$ having the Markov relationship $U - X_1 - X_2$.

For the direct part of the proof, first note that $E[X_2|X_1]=X_1$ is equivalent to the existence of a sequence of fair gambles that starts with wealth $X_1$ and ends up with wealth $X_2$. For simplicity, we provide a proof for discrete random variables with finite alphabets in this paper.  The proof for random variables with general distribution will be provided in the full paper under preparation.

The following lemma solves the case where $X_1=1$, which by scaling, holds for $X_1=c$ for any constant $c>0$.

\begin{lemma}\label{lemma one point}
Let $X$ be a discrete random variable with support set $\{x_1,..,x_n\}$, $x_1< x_2...<x_n,$ and probability mass function $\{p_1,...,p_n\}$, satisfying $x_1\geq 0$ and $EX=c$. There exists a sequence of fair binary gambles that starts with \$$c$ and ends up with wealth $X$.
\end{lemma}

\begin{proof}
We give a proof by induction.  For $n=2$, the lemma automatically holds. Suppose the lemma holds for $n=k$. Consider the case $n=k+1$. First, use a fair binary gamble to generate a binary random variable $V$:
\begin{equation*}
        V=\left\{
        \begin{array}{c}
        \frac{1-p_1x_1}{1-p_1},~\text{ w.p. }{1- p_1} \\
        x_1,~\text{ w.p. }{p_1}
        \end{array}
         \right.
\end{equation*}
If $V=x_1$, we stop the gambling; if $V=\frac{1-p_1x_1}{1-p_1}$, using the inductive assumption, there exists a sequence of fair binary gambles that  starts with $\frac{1-p_1x_1}{1-p_1}$ and ends up with a random variable $X'\sim (\frac{p_2}{1-p_1},\frac{p_3}{1-p_1},..., \frac{p_n}{1-p_1} )$ with support set $\{x_2,...,x_n\}$. This completes the proof of Lemma \ref{lemma one point}.
\end{proof}

\begin{figure}[h]{
\psfrag{a1}[][][.8]{$p_X(x_1)$}
\psfrag{a2}[][][.8]{$p_X(x_2)$}
\psfrag{us}[][][1]{$(u^*,l^*)$}
\centerline{\includegraphics[width=3.5in]{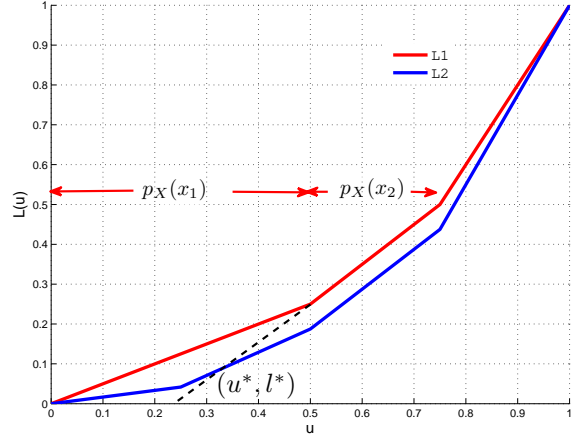}}}
\caption{Generate $p_Y$ from an upstream distribution $p_X$ using a sequence of fair binary gambles.}
\label{fig.direct_proof}
\end{figure}

Suppose $X$ and $Y$ are two non-negative discrete random variables with finite alphabets $\{x_1,...,x_n\}$ and $\{y_1,...,y_{n'}\}$, respectively, and mean one, and that the associated Lorenz curves of $X$ and $Y$ are $L_1$ and $L_2$, respectively, where $L_1(u)\geq L_2(u)$. Again, we will use induction to prove the result. Note that Lemma~\ref{lemma one point} proves the case $|\mathcal{X}|=1$, which serves as the starting case. Assume the direct part of the theorem holds for $|\mathcal{X}|=k$. Consider the case $|\mathcal{X}|=k+1$. Extend the line segment of $L_1(u)$ for $u\in [p_X(x_1),p_X(x_1)+p_X(x_2)]$ to the left, i.e., the region $u\leq p_X(x_1)$, until it intersects $L_2$ at some point $(u^*,l^*)$ The convexity of the Lorenz curve and the fact that $L_1(u)\geq L_2(u)$ guarantee that such a point exists and $u^*\leq p_X(x_1)$, illustrated in Fig.~\ref{fig.direct_proof}. Let $m^* = \mathrm{argmax}\{j: \Pr(Y< y_j) \leq u^*\}$, and define the distribution of $V$ by

\begin{equation*}
        V=\left\{
        \begin{array}{ll}
        y_i,&\text{ w.p. }\quad {\frac{p_Y(y_i)}{p_X(x_1)}, 1\leq i\leq m^*-1} \\
        y_{m^*},&\text{ w.p. }\quad \frac{u^*-\sum_{i=1}^{m^*-1}p_Y(y_i)}{p_X(x_1)}\\
        x_2,&\text{ w.p. }\quad  \frac{p_X(x_1)-u^*}{p_X(x_1)}
        \end{array}
         \right.
\end{equation*}

It is easy to check that $EV = x_1$. In the first step of the gamble, if $X\neq x_1$, we do nothing,  if $X=x_1$, we use $x_1$ to generate the random variable $V$, which is feasible due to Lemma~\ref{lemma one point}.  By doing so, we exchange $X_1$ for a random variable $Y_1$ whose Lorenz curve coincides with $L_2$ for $u \in [0,u^*]$ and coincides with $L_1$ for $u\in [p_X(x_1),1]$. Note that $Y_1|Y_1\geq x_2$ has a support set of cardinality $|\mathcal{X}|-1$, which is equal to $k$. Thus by the assumption of the induction, we can use $Y_1|Y_1\geq x_2$ to generate $L_2(u)$, $u\in (u^*,1]$. This completes the direct part of the theorem for discrete random variables with finite alphabets.
\end{proof}

\subsection{Lattice}

Theorem \ref{theorem Lorenz upstream} gives a partial ordering of mean-one distributions, based on the Lorenz curve.  We say that $p_1 \succeq p_2$, or $p_1$ is upstream from $p_2$, or $p_2$ is more degraded than $p_1$ if $L_1 \geq L_2 \; \forall u \in [0,1]$.

The partial ordering of distributions according to the Lorenz curve gives a lattice.  For any two mean-one distributions that are incomparable (the Lorenz curves cross), we can identify the last distribution (most degraded) from which they each can be produced through a system of fair gambles.  This will correspond to the point-wise maximum of the two Lorenz curves.  We can also identify the first distribution (least degraded) that they can each produce.  This is the greatest Lorenz curve below both.  Due to the convexity of the Lorenz curve, this will be the lower boundary of the convex hull of the epigraphs of both Lorenz curves.

\begin{figure}[h]{
\psfrag{La}[][][1]{$L_{a}$}
\psfrag{Lb}[][][1]{$L_{b}$}
\psfrag{Lab}[][][1]{$\quad\quad L_{p_a\bigcap p_b}$}
\psfrag{Lab2}[][][1]{$\quad\quad L_{p_a\bigcup p_b}$}
\centerline{\includegraphics[width=3.5in]{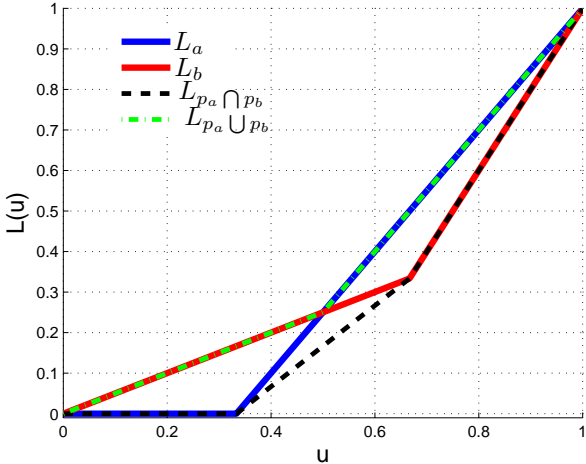}}}
\caption{This graph depicts the Lorenz curves of the least degraded distribution $L_{p_a\bigcap p_b}$ below $p_a$ and $p_a$ from Example \ref{example lattice} and the most degraded distribution $L_{p_a\bigcup p_b}$ above $p_a$ and $p_b$.}
\label{figure example lorenz}
\end{figure}

\subsection{Martingale Convergence}

Let $X_0, X_1, X_2,...$ be a sequence of random variables produced by fair gambles (wealths $X_n$ at times $n=0,1,2,...$).  Let $F_0, F_1,...$ be the corresponding sequence of cumulative distribution functions and $L_0(u), L_1(u),...$ their Lorenz functions.  Then we have the following simple proof of the convergence of $X_n$ in distribution.

\begin{theorem}
For a sequence $\{X_i\}$ resulting from fair gambles, there exists a random variable $X$ such that
\begin{eqnarray*}
X_n & \rightarrow & X \quad \mbox{in distribution.}
\end{eqnarray*}
(i.e. $F_n(x) \rightarrow F(x)$ at every point of continuity of $F$.)
\end{theorem}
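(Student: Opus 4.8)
The plan is to use Theorem~\ref{theorem Lorenz upstream} to convert the convergence of the distributions $F_n$ into the convergence of their Lorenz curves $L_n$, and then to recover convergence in distribution from convergence of the associated quantile functions. First I would observe that since each transition is a fair gamble, i.e. $\mathbf{E}(X_{n+1}|X_n) = X_n$, Theorem~\ref{theorem Lorenz upstream} gives $L_n(u) \geq L_{n+1}(u)$ for every $u \in [0,1]$. Hence for each fixed $u$ the sequence $\{L_n(u)\}_n$ is non-increasing, and since it is bounded below by $0$ (Property 4), it converges to a limit $L(u) := \lim_n L_n(u) = \inf_n L_n(u)$.

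Next I would check that $L$ is essentially a Lorenz curve. Convexity is preserved under pointwise limits, so $L$ is convex; the bounds $0 \leq L_n(u) \leq u$ pass to the limit, giving $0 \leq L(u) \leq u$; and $L(0)=0$, $L(1)=1$ hold termwise. The one delicate feature is the behaviour near $u=1$: the limit $L$ may have an upward jump there, $\lim_{u \to 1^-} L(u) < 1 = L(1)$, which corresponds to probability mass escaping to infinity along the sequence. This does not obstruct the argument; it merely means the limiting random variable $X$ may have mean strictly less than one. Since $L$ is finite and convex on $[0,1)$, its right derivative $F^{-1}(u) := L'_+(u)$ is finite and non-decreasing on $(0,1)$, hence is the quantile function of a genuine (proper) distribution $F$, and I take $X \sim F$.

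The key step is to promote $L_n \to L$ to $F_n \to F$. Here I would invoke the standard fact that pointwise convergence of convex functions to a finite convex limit forces convergence of the derivatives at every point where the limit is differentiable. Since $L_n'(u) = F_n^{-1}(u)$ and the convex function $L$ is differentiable off a countable set, this yields $F_n^{-1}(u) \to F^{-1}(u)$ for almost every $u \in (0,1)$. Finally, using the inverse-transform representation $\mathbf{E}\, g(X_n) = \int_0^1 g(F_n^{-1}(u))\, du$, for any bounded continuous $g$ the integrand converges almost everywhere to $g(F^{-1}(u))$ and is uniformly bounded, so dominated convergence gives $\mathbf{E}\, g(X_n) \to \mathbf{E}\, g(X)$. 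This is precisely weak convergence, equivalently $F_n(x) \to F(x)$ at every continuity point of $F$.

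The main obstacle I anticipate is the passage from $L_n \to L$ to $F_n^{-1} \to F^{-1}$ together with the endpoint subtlety. One must verify that the limiting convex function genuinely corresponds to a proper random variable even when mass escapes to infinity (the possible jump at $u=1$), and that convergence of convex functions does transfer to their slopes almost everywhere. The interior convexity is what makes the slope convergence automatic, while the endpoint $u=1$ must be handled separately and is exactly where the loss of mass, and hence of mean, becomes visible.
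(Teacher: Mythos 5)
Your proposal is correct and follows essentially the same route as the paper: both arguments rest on the pointwise monotone convergence of the Lorenz curves $L_n$ (via Theorem~\ref{theorem Lorenz upstream}) to a limiting convex function, from which the limiting distribution is read off. You supply the details the paper leaves implicit---recovering $F$ from the right derivative of $L$, passing from convergence of convex functions to a.e.\ convergence of their slopes, and treating the possible jump of $L$ at $u=1$ (mass escaping to infinity), of which the paper's degenerate case $L_n \to 0$ on $[0,1)$ is an instance---so your write-up is a rigorous elaboration rather than a different proof.
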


\begin{proof}
The sequence of Lorenz curves $L_n(u)$ is nonnegative, monotonically nonincreasing and thus has a limit $L(u)$.  $F(x)$ is the corresponding limiting cumulative distribution function.  In the case where $L_n(u)$ converges to zero for all $u \in [0,1)$, $L(u)$ is not a Lorenz curve for lack of continuity, and the limiting distribution is zero with probability one.
\end{proof}

\subsection{Second Law of Martingales}

The second law of thermodynamics has a close relationship to the behavior of entropy in Markov chains.  For a Markov chain with a uniform stationary distribution, entropy always increases.  However, this is not true for Martingales.

Theorem \ref{theorem Lorenz upstream} indicates that Martingales degrade in a different sense.  The Lorenz curve cannot increase.  A number of measures of inequality derived from the Lorenz curve, such as the Gini index, can only increase as a Martingale progresses.

\section{Efficiently gambling down the Lorenz stream}

\subsection{Air gambles}
    Suppose the casino offers binary fair bets:
    \begin{equation*}
        X\rightarrow \left\{
        \begin{array}{c}
        2X,~\text{w.p. }{1\over 2}, \\
        ~~~0,~\text{w.p. }{1\over 2}.
        \end{array}
         \right.
    \end{equation*}
    Suppose a gambler starts with \$1 and wants to achieve a uniform distribution $\text{unif}(0,2)$
    on his wealth after gambling. There is more than one possible gambling strategy to achieve this: \\
    {\bf Method 1:}
    \begin{itemize}
        \item Bet \$${1\over 2}$, then bet \$${1\over 4}$, then \$${1\over 8}$, \dots.
        \item After infinite bets the distribution of wealth is $\text{unif}(0,2)$.
    \end{itemize}
    In this method, the gambler must place on the casino's table a total of:
    \begin{equation*}
        {1\over 2}+{1\over 4}+{1\over 8}+\dots=1.
    \end{equation*}
    {\bf Method 2:}
    \begin{itemize}
        \item Generate $Y\sim\text{unif}(0,1)$.
        \item Bet \$Y.
    \end{itemize}
     In this method, the gambler used ``air gambles'' to randomize the amount of money placed on the table and supplement the ``physical gambles'' implemented by the casino, resulting in a reduced expected amount of money placed on the table,
    \begin{equation*}
        E(Y)={1\over 2}<1.
    \end{equation*}

\subsection{Optimal efficiency}

We now ask ``What is the minimum volume of bets to be placed on the table to achieve a desired target distribution?''  We characterize the optimal efficiency for certain special cases of the target distribution.

\begin{theorem}
Suppose a casino offers all fair binary gambles, i.e. $\forall c\in (0,1)$, the casino offers
    \begin{equation*}
        c\rightarrow \left\{
        \begin{array}{ll}
            1,&\text{w.p. }{c}, \\
            0,&\text{w.p. }{1-c},
        \end{array}
         \right.
    \end{equation*}
which can be scaled up or down (both cost and payout) as desired.  Then, the minimum expected amount that one has to place on the casino's table in order to gamble from $x$ to $0$ or $1$ is given by $V^*(x)=-(1-x)\ln(1-x)\leq x$.
\end{theorem}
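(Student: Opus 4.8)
The plan is to run a dynamic-programming (verification) argument, splitting the claim into a converse (lower bound) and a matching achievability. First I would fix the model of an elementary move. Since the casino offers only binary gambles with one branch paying $0$, a gambler at wealth $w$ who wishes to realize a transition $w\to\{a,b\}$ with $0\le a\le w\le b$ sets aside $a$ and wagers the remaining $w-a$ on a suitably scaled casino gamble with winning payout $b-a$; the money placed on the table is then exactly $w-a$, and fairness forces the transition probabilities $\frac{b-w}{b-a}$ to $a$ and $\frac{w-a}{b-a}$ to $b$. A general strategy (air gambles included) is an adapted, possibly privately randomized, sequence of such moves. I would note at the outset that because the wealth process $X_n$ is a nonnegative martingale absorbed in $\{0,1\}$, we have $X_n=\mathbf E(X_\infty\mid\mathcal F_n)\in[0,1]$ for every $n$; in particular every move has $b\le 1$, so the state space is automatically $[0,1]$ and wealth never needs to exceed $1$.

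For the converse I would guess the value function $V^*(x)=-(1-x)\ln(1-x)$ and verify it. Writing $C_n$ for the money placed through step $n$, a single move from $w$ to $\{a,b\}$ changes $C_n+V^*(X_n)$ in conditional expectation by $(w-a)+\frac{b-w}{b-a}V^*(a)+\frac{w-a}{b-a}V^*(b)-V^*(w)$. The key fact to establish is the subsolution inequality
\[
 V^*(w)\ \le\ (w-a)+\frac{b-w}{b-a}V^*(a)+\frac{w-a}{b-a}V^*(b),\qquad 0\le a\le w\le b\le 1 .
\]
Once this holds, $C_n+V^*(X_n)$ is a submartingale, so $\mathbf E\,C_n\ge V^*(x)-\mathbf E\,V^*(X_n)$; letting $n\to\infty$ with $X_n\to X_\infty\in\{0,1\}$ and $V^*$ continuous with $V^*(0)=V^*(1)=0$, monotone convergence of $C_n$ and boundedness of $V^*$ give that the total expected money on the table is at least $V^*(x)$.

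Verifying the displayed inequality is where I expect the real work. I would rewrite the concavity gap via the divided-difference identity
\[
 V^*(w)-\frac{(b-w)V^*(a)+(w-a)V^*(b)}{b-a}=\frac{(w-a)(b-w)}{b-a}\,(S_1-S_2),
\]
where $S_1=\frac{V^*(w)-V^*(a)}{w-a}$ and $S_2=\frac{V^*(b)-V^*(w)}{b-w}$ are the two chord slopes, so the claim reduces to $\frac{b-w}{b-a}(S_1-S_2)\le 1$. Holding $w,b$ fixed, both factors $\frac{b-w}{b-a}$ and $S_1-S_2$ are nonnegative and increase as $a\uparrow w$ (the latter by concavity of $V^*$), so the left side is maximized in the limit $a\to w^-$, where it equals $V^{*\prime}(w)-S_2$. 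It then suffices to check $V^{*\prime}(w)-\frac{V^*(b)-V^*(w)}{b-w}\le 1$ for $w\le b\le 1$: this vanishes at $b=w$, is increasing in $b$ by concavity, and equals exactly $1$ at $b=1$ because $V^*$ solves $V^{*\prime}(w)+\frac{V^*(w)}{1-w}=1$ with $V^*(1)=0$ — the same first-order relation that makes infinitesimal down-moves cost-tight.

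Finally I would prove achievability with the matching strategy: from wealth $w$ the gambler makes the move $w\to\{w-\varepsilon,1\}$, slowly leaking downward while giving himself a small chance of jackpotting to $1$ at each step, so the money on the table per step is $\varepsilon$. The probability of surviving (never hitting $1$) down to level $w$ solves out to $(1-x)/(1-w)$, whence the expected money on the table is $\int_0^x \frac{1-x}{1-w}\,dw=-(1-x)\ln(1-x)$, matching the converse; a single air gamble randomizing the leak implements this limiting strategy directly. The bound $V^*(x)\le x$ is the elementary estimate $-\ln(1-x)\le x/(1-x)$. The main obstacle throughout is the subsolution inequality above; the achievability integral and the $V^*\le x$ estimate are routine.
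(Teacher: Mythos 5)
Your overall architecture matches the paper's: the same leak-toward-$1$ achievability scheme (your survival-probability integral is just the closed-form solution of the paper's differential equation $dV/dx = 1 - V/(1-x)$), and a converse via the Bellman subsolution inequality $V^*(a+(b-a)\theta)\le (b-a)\theta+(1-\theta)V^*(a)+\theta V^*(b)$, which is exactly the inequality the paper writes down. Your submartingale formulation for $C_n+V^*(X_n)$ handles private randomization more cleanly than the paper's extreme-point argument about air gambles, and the observation that the wealth process stays in $[0,1]$ is a useful normalization. The paper, however, dismisses the verification of the key inequality with ``some algebra and calculus,'' whereas you attempt it --- and that is where your argument breaks.

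The flaw: you claim that both factors of $\frac{b-w}{b-a}(S_1-S_2)$ increase as $a\uparrow w$, the second ``by concavity of $V^*$.'' Concavity gives the opposite: for a concave function the chord slope $S_1=\frac{V^*(w)-V^*(a)}{w-a}$ is non-increasing in $a$, decreasing to $V^{*\prime}(w^-)$ as $a\uparrow w$. So $S_1-S_2$ attains its \emph{minimum}, not its maximum, in that limit, and you are multiplying an increasing factor by a decreasing one; nothing forces the product to peak at $a\to w^-$, so the reduction to the single inequality $V^{*\prime}(w)-\frac{V^*(b)-V^*(w)}{b-w}\le 1$ is unjustified. The conclusion is still true, and your final chord-slope estimate is exactly the right ingredient, just deployed at the wrong point: fix $a$ and $b$ and regard the gap $\Psi(w)=(w-a)+(1-\theta)V^*(a)+\theta V^*(b)-V^*(w)$, with $\theta=\frac{w-a}{b-a}$, as a function of $w\in[a,b]$. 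It is convex in $w$ (since $V^*$ is concave), vanishes at $w=a$, and $\Psi'(a)=1-\bigl(V^{*\prime}(a)-\frac{V^*(b)-V^*(a)}{b-a}\bigr)\ge 0$ by your estimate applied at the point $a$; convexity then keeps $\Psi'\ge 0$ on $[a,b]$, so $\Psi\ge 0$. With that repair the converse is complete; the achievability computation and the bound $V^*(x)\le x$ are fine as you have them.
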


\begin{proof}\\
{\bf Achievability:}\\
Suppose we start with $x$. Here is a strategy to gamble to 0 or 1:
\begin{enumerate}
    \item Choose $n$ large enough and let $\delta x=\frac{x_0}{n}$.
    \item The algorithm is $n$ steps long. Let $x_n$ denote the wealth before the $n$th step.  If $x_n \neq 1$, make the following gamble:
        \begin{equation*}
            \delta x\rightarrow \left\{
            \begin{array}{ccc}
                1-x_n+\delta x,&\text{w.p. } & {\frac{\delta x}{1-x_n+\delta x}} \\
                0,&\text{w.p. } & {1-{\frac{\delta x}{1-x_n+\delta x}}}
            \end{array}
            \right.
        \end{equation*}
    \item If the gamble succeeds, we are left with $1$. If it fails, we are left with $(x-\delta x)$ and we continue the process.
\end{enumerate}

Let $V(x)$ denote the total volume of bets placed on the table to reach 0 or 1 from $x$.  Then,
\begin{equation*}
 V(x) = \delta x + \left[1-{\frac{\delta x}{1-x+\delta x}}\right] V(x-\delta x).
\end{equation*}
In the limit as $\delta x \rightarrow 0$, this can be recast as a differential equation
\begin {equation*}
\frac{dV(x)}{dx} = 1 - \frac{V(x)}{1-x}.
\end{equation*}
Solving this differential equation with boundary conditions $V(0)=V(1)=0$  gives
\begin{equation*}
V(x)=-(1-x)\ln (1-x).
\end{equation*}\\
{\bf Converse:}\\
We only outline the proof of the converse.
In the above achievability scheme, we used only ``physical'' gambles. We did not use ``air'' gambles. So we must first reason why air gambles are not needed. Note that the target distribution
    \begin{equation*}
        X\sim \left\{
        \begin{array}{ccc}
            1,&\text{w.p. }&{x} \\
            0,&\text{w.p. }&{1-x}
        \end{array}
         \right.
    \end{equation*}
is not a convex combination of two or more different distributions with mean $1$. In other words, it is an ``extreme'' distribution: an extreme point of the convex set of probability distributions with mean $1$.
A sequence of ``air'' gambles is simply a convex combination of ``physical'' gambles. Since the target distribution $X$ considered above is an extreme distribution, every physical gambling tree has to achieve $X$ on termination. The volume of bets in an ``air'' gambling scheme is the average of the volume of bets in physical gambling schemes that constitute the air gambling scheme. It follows that we need only minimize the volume $V^*(x)$ over physical gambling schemes.

Now to establish optimality over physical gambles, we use a dynamic programming approach similar to \cite{dubbins-savage76}.  According to the Bellman equation, we only need to show that $V^*$ satisfies
\begin{align*}
V^*(a+(b-a)\theta) &\leq (b-a)\theta \; + \\
&(1-\theta)V^*(a)+\theta V^*(b)
\end{align*}
whenever $0\leq a\leq b\leq1,~\text{and }0\leq\theta\leq1$.  This can be proven using some algebra and calculus.

The left hand side of the above inequality simply uses the scheme and expends a volume $V*$. The right hand side deviates from $V^*$ for only the first time step by playing an arbitrary gamble $(b-a)\theta\rightarrow(b-a)$. The inequality suggests deviating from $V^*$ is worse. By invoking this dynamic programming argument, we conclude that $V^*$ is optimal.
\end{proof}

\end{document}